\providecommand{\U}[1]{\protect\rule{.1in}{.1in}}
\newtheorem{theorem}{Theorem}[section]
\newtheorem{proposition}[theorem]{Proposition}
\newtheorem{example}[theorem]{Example}
\newtheorem{remark}[theorem]{Remark}
\newtheorem{lemma}[theorem]{Lemma}
\newtheorem{final remark}[theorem]{Final Remark}
\newtheorem{definition}[theorem]{Definition}
\begin{document}

\title{\sc On Pietsch measures for summing operators and dominated polynomials}
\date{}
\author{Geraldo Botelho\thanks{Supported by CNPq Grant
302177/2011-6 and Fapemig Grant PPM-00295-11.}\,\,, Daniel Pellegrino\thanks{Supported by CNPq Grant
301237/2009-3.}~~and Pilar Rueda\thanks{Supported by Ministerio de Ciencia e Innovaci\'on MTM2011-22417.\hfill\newline2010 Mathematics Subject
Classification. 28C15, 46G25, 47B10, 47L22. \newline Keywords: absolutely summing operators, dominated homogeneous polynomials, Pietsch measures, factorization theorem.}}
\maketitle

\begin{abstract} We relate the injectivity of the canonical map from $C(B_{E'})$ to $ L_p(\mu)$, where $\mu$ is a regular Borel probability measure on the closed unit ball $B_{E'}$ of the dual $E'$ of a Banach space $E$ endowed with the weak* topology, to the existence of injective $p$-summing linear operators/$p$-dominated homogeneous polynomials defined on $E$ having $\mu$ as a Pietsch measure. As an application we fill the gap in the proofs of some results of \cite{BoPeRu, BoPeRuLAMA} concerning Pietsch-type factorization of dominated polynomials.

\end{abstract}

\section{Introduction}
The Pietsch domination theorem and the Pietsch factorization theorem are two cornerstones in the theory of absolutely $p$-summing linear operators. The domination theorem asserts that a linear operator $u \colon E \longrightarrow F$ between Banach spaces is absolutely $p$-summing if and only if there is a constant $C>0$ and
 a regular Borel probability measure on the closed unit ball $B_{E'}$ of the dual $E'$ of $E$ endowed with the weak* topology such that
\begin{equation}\label{dominationlinear}
\|u(x)\| \leq C \left(\int_{B_{E'}} |\varphi(x)|^p d\mu(\varphi)\right)^{\frac{1}{p}}{\rm ~ for~ every~} x \in E.
\end{equation}
Any such measure $\mu$ is called a {\it Pietsch measure} for $u$.

The first attempts to generalize the theory to the nonlinear setting led to classes of multilinear mappings \cite{Pie, geiss} and homogeneous polynomials \cite{ma96} that enjoy a Pietsch-type domination theorem. And that is why these mappings are called {\it dominated}. By definition, a continuous $n$-homogeneous $P\colon E \longrightarrow F$ is $p$-dominated if $P$ sends weakly $p$-summable sequences in $E$ to absolutely $\frac{p}{n}$-summable sequences in $F$. In \cite{ma96} it is proved that $P$ is $p$-dominated if and only if there is a constant $C>0$ and
 a regular Borel probability measure on $B_{E'}$ endowed with the weak* topology such that
\begin{equation}\label{dominationpolynomial}
\|P(x)\| \leq C \left(\int_{B_{E'}} |\varphi(x)|^p d\mu(\varphi)\right)^{\frac{n}{p}}{\rm ~ for~ every~} x \in E.
\end{equation}
Again, any such measure $\mu$ is called a {\it Pietsch measure} for $P$.

Continuing this line of thought, the last decades witnessed the emergence of a great variety of classes of nonlinear operators, such as multilinear mappings, homogeneous polynomials, subhomogeneous mappings, Lipschitz mappings and, quite recently,  arbitrary mappings, that generalize the notion of absolutely summing linear operator. In many cases, the researchers end up with mappings that enjoy a Pietsch-type domination theorem. Very general versions of the domination theorem for summing nonlinear mappings were successfully treated in \cite{BoPeRuJMAA, PeSa, PeSaSe}.

On the other hand, the story of the Pietsch factorization theorem is quite different. Let us recall the linear case. Given a Banach space $E$, consider the
linear isometry
$$e \colon E \longrightarrow C(B_{E'})~,~ e(x)(\varphi) = \varphi(x).$$
Given $0 < p < + \infty$ and a regular Borel probability measure $\mu$ on $B_{E'}$, let $j_p \colon C(B_{E'})
\longrightarrow L_p(\mu)$ be the canonical map. By $j_p^e$ we denote the restriction of $j_p$ to $e(E)$. The Pietsch factorization theorem asserts that if $\mu$ is a Pietsch measure for the $p$-summing linear operator $u \colon E \longrightarrow F$, then there exist a (closed) subspace $X_p$ of $L_p(\mu)$ containing $(j_p \circ e)(E)$ and a continuous operator $\hat u \colon
  X_p\longrightarrow F$ such that the following diagram commutes:
  \begin{center}
$\begin{CD}
E                      @ > u >>            F\\
@V{e}VV                  @ AA\hat uA\\
e(E) @>{j_p^e}>> X_p\\
@VVV @VVV\\
C(B_{E'}) @>{j_p} >> L_p(\mu)
\end{CD}$
\end{center}
Here, $X_p$ is nothing but the completion of the linear space $j_p^e\circ e(E)$ and $\hat u$ is defined by $\hat u(j_p^e\circ e(x))=u(x)$, $x\in E$, and extended by continuity to the completion $X_p$. In other words, $j_p$ is a $p$-summing operator through which every $p$-summing operator factors.

Encouraged by the validity of the domination theorem for several classes of nonlinear summing mappings, nonlinear versions of the factorization theorem started been pursued. At this point it is important to observe how the factorization theorem is derived from the domination theorem. The linearity of $u$ jointly with the domination formula (\ref{dominationlinear}) yield the well definition of $\hat u$ and its continuity. Indeed, if $j_p^e\circ e(x)=j_p^e\circ e(y)$ for some $x, y\in E$ then $ \left(\int_{B_{E'}} |\varphi(x-y)|^p d\mu(\varphi)\right)^{\frac{1}{p}}=0$. Hence, by (\ref{dominationlinear}) we have $\|u(x)-u(y)\|=\|u(x-y)\|=0$ and then $u(x)=u(y)$. The role played by the linearity of $u$ must be noted. This reasoning obviously does not work to derive a factorization theorem for nonlinear summing mappings from a correspondent domination theorem. If the canonical map $j_p$ -- or its restriction $j_p^e$ -- is injective, then the conclusion is straightforward. In Section 2 we prove that $j_p^e$ is injective if and only if $\mu$ is a Pietsch measure for some injective $p$-summing linear operator defined on $E$. In Section 3 we investigate the validity of this equivalence for $p$-dominated polynomials. To do so we prove that a dominated polynomial shares some of its Pietsch measures with all its derivatives. The result proved in Section 2 is enough to give examples where $j_p^e$ is injective. However, we also prove that $j_p^e$ may fail to be injective. Therefore, the domination theorem (\ref{dominationpolynomial}) for polynomials does not suffice to ensure that the closing operator for a factorization scheme is well-defined. The conclusion is that the nonlinear case demands new techniques.

In \cite{BoPeRu} the authors used a renorming technique to state a Pietsch-type factorization theorem for dominated polynomials asserting that if $\mu$ if a Pietsch measure for the $p$-dominated $n$-homogeneous polynomial $P \colon E \longrightarrow F$, then there is a renorming of the subspace $(j_{p/n})^n \circ e(E)$ of $L_{p/n}(\mu)$ and an operator $v$ from the resulting space to $F$ such that $P = v \circ (j_{p/n})^n \circ e$. In this fashion $(j_{p/n})^n$ is a prototype of an $n$-homogeneous polynomial through which every $p$-dominated polynomial factors. This accomplishes the search for a Pietsch-type factorization theorem for dominated polynomials. Moreover, in \cite{BoPeRuLAMA} the authors give a reinterpretation of this factorization scheme in terms of symmetric projective tensor products. The problem is that the authors  did not take into account that the canonical map $j_p^e$ may fail to be injective, and the proofs of some of the results in \cite{BoPeRu, BoPeRuLAMA} use the injectivity of $j_p^e$. In Section 4 we apply the results of Section 3 to provide these result of \cite{BoPeRu, BoPeRuLAMA} (in the normed case $p \geq n$) with proofs that do not depend on the injectivity of $j_p^e$. So, although there is a gap in the original proofs, the results of \cite{BoPeRu, BoPeRuLAMA} hold in the normed case as they are stated there up to a slight modification in the renorming.
%
%
%
%
%
%
%

 Given a Banach space $E$, let $E'$ denote its topological dual space. The closed unit ball $B_{E'}$ of $E'$ is henceforth endowed with the weak* topology. 
The space of all  continuous $n$-homogeneous polynomials between Banach spaces $E$ and $F$ is denoted by ${\mathcal P}(^nE;F)$ and  is endowed with the usual sup norm. For background on spaces of polynomials we refer to \cite{din, mujica}.  

\section{The canonical map $C(B_{E'}) \longrightarrow L_p(\mu)$}
In this section we establish that the canonical map $j_p \colon  C(B_{E'}) \longrightarrow L_p(\mu)$ is sometimes injective and sometimes non-injective.

First we give some examples where $j_p$ is injective. Remember that $j_p^e$ denotes the restriction of $j_p$ to $e(E)$ and that $e \colon E \longrightarrow C(B_{E'})$ is an isometry, hence injective. By $W(B_{E'},w^*)$ we mean the set of all regular Borel probability measures on $B_{E'}$ endowed with the weak* topology.

\begin{proposition}\label{TH} Let $\mu \in  W(B_{E'},w^*)$ and $1 \leq p < \infty$ be given. Then  $j_p^e$ is injective if and only if $\mu$ is a Pietsch measure for some injective $p$-summing linear operator defined on $E$.   \end{proposition}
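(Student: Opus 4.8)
The plan is to reduce the injectivity of $j_p^e$ to a single scalar condition and then read off each direction of the equivalence directly from the Pietsch domination inequality. Since $e$ is an isometric embedding, the composition $j_p\circ e=j_p^e\circ e\colon E\longrightarrow L_p(\mu)$ is injective if and only if $j_p^e$ is. Moreover $j_p\circ e$ sends $x$ to the class of the function $\varphi\mapsto\varphi(x)$, which is the zero element of $L_p(\mu)$ exactly when $\int_{B_{E'}}|\varphi(x)|^p\,d\mu(\varphi)=0$ (the $L_p(\mu)$-norm of a function vanishes if and only if the function is null $\mu$-almost everywhere). So the first thing I would record is the reformulation
\[
j_p^e \text{ is injective}\iff\Big[\,x\in E,\ \int_{B_{E'}}|\varphi(x)|^p\,d\mu(\varphi)=0\ \Longrightarrow\ x=0\,\Big].
\]

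For the implication from left to right, the natural candidate is $u:=j_p\circ e\colon E\longrightarrow L_p(\mu)$ itself. It is linear and bounded, and for every $x\in E$ we have the identity
\[
\|u(x)\|_{L_p(\mu)}=\left(\int_{B_{E'}}|\varphi(x)|^p\,d\mu(\varphi)\right)^{1/p}.
\]
This is precisely the domination inequality (\ref{dominationlinear}) with $C=1$, so the domination theorem ensures that $u$ is $p$-summing and that $\mu$ is a Pietsch measure for it. The same identity shows that the kernel of $u$ equals the set of $x$ for which the integral vanishes, which, by the reformulation above, is trivial exactly when $j_p^e$ is injective. Hence, assuming $j_p^e$ injective, $u$ is an injective $p$-summing operator defined on $E$ having $\mu$ as a Pietsch measure.

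For the converse, suppose $\mu$ is a Pietsch measure for some injective $p$-summing operator $u\colon E\longrightarrow F$, with domination constant $C$. If $x\in E$ satisfies $\int_{B_{E'}}|\varphi(x)|^p\,d\mu(\varphi)=0$, then (\ref{dominationlinear}) forces $\|u(x)\|\leq C\cdot 0=0$, so $u(x)=0$; injectivity of $u$ then yields $x=0$. By the reformulation this is exactly the assertion that $j_p^e$ is injective, which completes the proof.

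I do not anticipate a genuine obstacle here: the statement is essentially a matter of identifying $u=j_p\circ e$ as the operator to test against and recognizing that \emph{its} domination inequality is in fact an equality. The only points that need a careful (but routine) word are the translation between ``$j_p(e(x))=0$ in $L_p(\mu)$'' and the vanishing of the integral $\int_{B_{E'}}|\varphi(x)|^p\,d\mu$, and the verification that $u=j_p\circ e$ genuinely belongs to the class of $p$-summing operators admitting $\mu$ as a Pietsch measure.
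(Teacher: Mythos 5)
Your proof is correct. The forward direction is identical to the paper's: both take $u=j_p\circ e$ as the witness, noting that the domination inequality holds (with $C=1$, in fact with equality) so that $\mu$ is a Pietsch measure for this $p$-summing operator. In the converse you diverge from the paper in a worthwhile way: the paper invokes the Pietsch factorization theorem to write $u=v\circ j_p^e\circ e$ and then transports injectivity through $v$, whereas you bypass factorization entirely and read the conclusion straight off the domination inequality --- if $\int_{B_{E'}}|\varphi(x)|^p\,d\mu=0$ then $\|u(x)\|\leq C\cdot 0$, so $u(x)=0$ and $x=0$ by injectivity of $u$. Your route is more elementary (it uses only the definition of a Pietsch measure, not the factorization theorem) and is in fact the same linearity-plus-domination argument the paper itself sketches in its introduction when explaining why the linear factorization theorem works; the paper's choice of the factorization theorem in the proof proper buys nothing extra here. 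Your preliminary reformulation of the injectivity of $j_p^e$ as the vanishing condition on the integral is sound, since $e$ is an isometry and the $L_p(\mu)$-class of $e(x)$ is zero precisely when $\int_{B_{E'}}|\varphi(x)|^p\,d\mu(\varphi)=0$.
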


\begin{proof} If $j_p^e \colon e(E) \longrightarrow L_p(\mu)$ is injective, then $j_p^e \circ e \colon E \longrightarrow L_p(\mu)$ is an injective $p$-summing linear operator on $E$ having $\mu$ as a Pietsch measure.

Conversely, assume that there are a Banach space $F$ and an injective $p$-summing linear operator $u \colon E \longrightarrow F$ having $\mu$ as a Pietsch measure. By the Pietsch Factorization Theorem there exists a continuous linear operator $v \colon j_p(e(E)) \longrightarrow F$ such that $u = v \circ j_p^e \circ e$. Let $x,y \in E$ be such that $j_p^e(e(x)) = j_p^e(e(y))$. Then,
$$ u(x) = v(j_p^e(e(x))) = v(j_p^e(e(y))) = u(y).$$
The injectivity of $u$ gives $x = y$, proving that $j_p^e$ is injective.
\end{proof}

\begin{example} \rm (i) It is clear that the formal inclusion $i \colon \ell_1 \longrightarrow \ell_2$ is an injective linear operator. It is 1-summing by Grothendieck's Theorem. So, if $\mu$ is a Pietsch measure for $i$, then by Proposition \ref{TH} the canonical map $j_1^e \colon e(\ell_1) \longrightarrow L_1(\mu)$ is injective.

\medskip

\noindent (ii) Let $\mu$ be a finite measure. The formal inclusion $I \colon L_\infty(\mu) \longrightarrow L_p(\mu)$ is clearly an injective linear operator. It is $p$-summing by \cite[Example 2.9(d)]{DJT}. So, if $\nu$ is a Pietsch measure for $I$, then by Proposition \ref{TH} the canonical map $j_p^e \colon e(L_\infty(\mu)) \longrightarrow L_p(\nu)$ is injective. It is well known that $L_\infty(\mu)$ is a $C(K)$ space for some compact Hausdorff space $K$, so $e\colon L_\infty(\mu) \longrightarrow C(B_{L_\infty(\mu)'})$ is bijective (see \cite[proof of Corollary 2.15]{DJT}). It follows that $j_p \colon C(B_{L_\infty(\mu)'}) \longrightarrow L_p(\mu)$ is injective.

\medskip

\noindent (iii) It is clear that
$$u \colon c_0 \longrightarrow \ell_1~,~u\left((a_j)_{j=1}^\infty \right)= \left(\frac{a_j}{2^j} \right)_{j=1}^\infty,  $$
is an injective continuous linear operator. It is 2-summing by the dual Grothendieck theorem \cite[Theorem 3.7]{DJT}. So, if $\mu$ is a Pietsch measure for $u$, then by Proposition \ref{TH} the canonical map $j_2^e \colon e(c_0) \longrightarrow L_2(\mu)$ is injective. As in example (ii) it follows that $j_p \colon C(B_{\ell_1}) \longrightarrow L_p(\mu)$ is injective.
\end{example}

Now we show that sometimes $j_p$ fails to be injective.

\begin{proposition} Let $K$ be a compact Hausdorff space containing at least two elements and  $0 < p < \infty$. Then there is a regular probability measure $\mu$ on the Borel sets of $K$ such that the canonical mapping $j_p \colon C(K) \longrightarrow L_p(\mu)$ fails to be injective.
\end{proposition}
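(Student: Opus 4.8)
The plan is to exploit the fact that $j_p$ fails to be injective precisely when some nonzero $f \in C(K)$ vanishes $\mu$-almost everywhere, and to force exactly this by choosing a measure whose support is a proper subset of $K$. The most economical choice is a Dirac measure concentrated at a single point, and everything will follow from the separation properties of compact Hausdorff spaces.

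Concretely, I would fix two distinct points $a, b \in K$, which exist since $K$ has at least two elements, and set $\mu = \delta_a$, the Dirac measure at $a$. This is a regular Borel probability measure: it assigns mass $1$ to every Borel set containing $a$ and $0$ otherwise, and its regularity is immediate. Since $K$ is compact Hausdorff, hence normal, Urysohn's lemma produces a continuous function $f \colon K \to [0,1]$ with $f(a) = 0$ and $f(b) = 1$. For this $f$ I then compute
$$\int_K |f|^p \, d\mu = |f(a)|^p = 0,$$
so that $j_p(f) = 0$ in $L_p(\mu)$ (this characterization of the zero class holds for every $0 < p < \infty$, even when $p<1$ and $L_p(\mu)$ is not normed). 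On the other hand $f \neq 0$ in $C(K)$ because $f(b) = 1$. Hence $j_p$ sends the nonzero element $f$ to $0$ and therefore cannot be injective.

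The argument is essentially obstruction-free; the only step requiring any care is the existence of a nonzero continuous function vanishing at $a$, which is guaranteed by the normality of compact Hausdorff spaces. I would close by remarking that this non-injectivity is unavoidable whenever the support of $\mu$ is a proper closed subset of $K$: any nonzero continuous function supported away from $\mathrm{supp}(\mu)$ lies in the kernel of $j_p$, whereas full support of $\mu$ would force injectivity (a nonzero continuous function is nonzero on a nonempty open set, which then has positive measure). The Dirac measure is simply the most extreme instance of this, which is why it gives the cleanest counterexample.
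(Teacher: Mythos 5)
Your proof is correct and follows essentially the same route as the paper: a Dirac measure at one of the two points together with Urysohn's lemma produces a nonzero $f\in C(K)$ that is zero $\mu$-almost everywhere. In fact your version is marginally leaner, since for $\mu=\delta_a$ it suffices that $f(a)=0$, whereas the paper arranges for $f$ to vanish on a whole closed neighbourhood of the mass point before drawing the same conclusion.
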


\begin{proof} Pick $x,z \in K$, $x \neq z$. There are open sets $V,W$ such that $x \in V$, $z \in W$ and $V \cap W = \emptyset$. Then $V^c$ is closed, $x \notin V^c$ and $W \subseteq V^c$. It follows that
$$x \notin \textstyle\bigcap\{F : F {\rm ~is~closed~and~} W \subseteq F\} = \overline{W}.  $$
So, $\overline{W}$ and $\{x\}$ are disjoint closed subsets of the normal space $K$ (remember that compact Hausdorff spaces are normal). By Urysohn's Lemma there is a continuous function $f \colon K \longrightarrow [0,1]$ such that $f(x) =1$ and $f(y) = 0$ for every $y \in \overline{W}$. In particular, $f \neq 0$ in $C(K)$. Let $\mu$ be the restriction of the Dirac measure at $z$ (the point mass associated with $z$) to the Borel sets of $K$. Since $z \in W \subseteq \overline{W}$, we have $z \notin \overline{W}^{\,c}$, so $\mu( \overline{W}^{\,c}) = 0$. Then $f(y) = 0$ for every $y \in \overline{W}$ and $\mu( \overline{W}^{\,c}) = 0$. This shows that $f = 0$ $\mu$-almost everywhere, that is, $j_p(f) = 0$ in $L_p(\mu)$.
\end{proof}

\bigskip

For every Banach space $E\neq \{0\}$, $(B_{E'}, w^*)$ is a compact Hausdorff space containing more than two elements, so there is always a regular probability measure $\mu$ on $(B_{E'}, w^*)$ such that $j_p \colon C(B_{E'}) \longrightarrow L_p(\mu)$ fails to be injective. As we have seen above, $e$ is bijective whenever $E$ is a $C(K)$ space, so in this case $j_p^e$ is not injective either.

\begin{remark}\rm Although not injective in general, the map $j_p$ is sometimes referred to as {\it the natural injection} (see, e.g., \cite[p.\!\! 1301]{maurey}, \cite[p.\,9]{rosenthal}).
\end{remark}

\section{Injective dominated polynomials}
Considering that dominated polynomials have Pietsch measures as well as summing operators, it is a natural question to ask for a polynomial version of Proposition \ref{TH}: given $n \in \mathbb{N}$, is it true that $j_p ^ e$ is injective if and only if $\mu$ is a Pietsch measure for some injective $p$-dominated $n$-homogeneous polynomial defined on $E$?

For $n \geq 2$, when the scalars are complex or $n$ is even, $n$-homogeneous polynomials are never injective.
Therefore, the above question shall be treated only for real $n$-homogeneous polynomials with $n$ being odd. Remember that, for $n$ odd and real scalars, injective $n$-homogeneous polynomials on infinite dimensional spaces do exist: for example, for every $n$ odd, in the real case the $n$-homogeneous polynomial
$$(a_j)_{j=1}^\infty \in \ell_\infty \mapsto (a_j^n)_{j=1}^\infty \in \ell_\infty, $$
is injective.

It is well known that the derivatives of a $p$-dominated polynomial $P$ are $p$-dominated polynomials as well. But this does not mean that $P$ shares a Pietsch measure with its derivatives. This condition is crucial for our purposes. 
Given  $P\in {\cal P}(^nE;F)$, $a \in E$ and $k \in \{1, \ldots, n\}$, by $\widehat d^kP(a)$ we denote the $k$th derivative of $P$ at $a$ and by $\check P$ the unique symmetric $n$-linear mapping such that $P(x)=\check P(x,\ldots,x)$ for all $x\in E$. Observe that $P = \widehat d^nP(a)$.

\begin{definition}\rm A measure $\mu \in W(B_{E'},w^*)$ is said to be a {\it differential Pietsch measure} for the $p$-dominated polynomial $P\in {\cal P}(^nE;F)$ if $\mu$ is a Pietsch measure for $\widehat d^kP(a)$ for all $a \in E$ and $k = 1, \ldots, n$.
\end{definition}

First of all we have to establish that it is not restrictive to work with differential Pietsch measures:

\begin{proposition}\label{prop1} Every $p$-dominated homogeneous polynomial has a differential Pietsch measure.
\end{proposition}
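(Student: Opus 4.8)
The plan is to show that \emph{every} Pietsch measure of $P$ is automatically a differential Pietsch measure, so that no new measure has to be constructed at all. Concretely, I fix a Pietsch measure $\mu \in W(B_{E'},w^*)$ and a constant $C>0$ for which (\ref{dominationpolynomial}) holds, fix $a \in E$ and $k \in \{1,\dots,n\}$, and abbreviate $A = \left(\int_{B_{E'}}|\varphi(a)|^p\,d\mu(\varphi)\right)^{1/p}$ and, for each $x \in E$, $X = \left(\int_{B_{E'}}|\varphi(x)|^p\,d\mu(\varphi)\right)^{1/p}$. The target estimate is $\|\widehat d^kP(a)(x)\| \le C_{a,k}\,X^k$ with a constant $C_{a,k}$ independent of $x$; this is exactly the statement that $\mu$ is a Pietsch measure for the $k$-homogeneous polynomial $\widehat d^kP(a)$. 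Merely knowing that the derivatives are $p$-dominated does not help, since the point is to keep the \emph{same} $\mu$ for every $a$ and $k$.

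The device is to freeze the direction $x$ and study the one-variable slice $g(t) = P(a+tx)$, a polynomial of degree $n$ in the scalar $t$ with coefficients in $F$. Expanding $P(a+tx)=\check P\big((a+tx)^n\big)$ by the binomial theorem gives $g(t)=\sum_{k=0}^n \binom{n}{k}\check P(a^{n-k},x^k)\,t^k$, so that the coefficient of $t^k$ is $\binom{n}{k}\check P(a^{n-k},x^k)$, which in the normalization of the paper (for which $\widehat d^nP(a)=P$) is exactly $\widehat d^kP(a)(x)$. On the other hand, (\ref{dominationpolynomial}) together with $|\varphi(a+tx)| \le |\varphi(a)| + |t|\,|\varphi(x)|$ and Minkowski's inequality in $L_p(\mu)$ yields the uniform growth bound $\|g(t)\| \le C\,(A + |t|\,X)^n$.

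Next I would extract the $k$th coefficient from this growth bound. The functional ``$k$th coefficient'' is linear on the finite-dimensional space of scalar polynomials of degree $\le n$ normed by the supremum over $\{|t|\le 1\}$, hence bounded with some finite norm $B_{n,k}$; composing $g$ with functionals $\psi \in B_{F'}$ and invoking Hahn--Banach transfers this to the vector-valued slice. After the rescaling $t \mapsto t/r$ this gives $\|\widehat d^kP(a)(x)\| \le B_{n,k}\,r^{-k}\sup_{|t|\le r}\|g(t)\| \le B_{n,k}\,C\,r^{-k}(A + rX)^n$ for every $r>0$. For $k<n$ I then minimize the right-hand side over $r$: the optimal $r=kA/((n-k)X)$ (when $X>0$) collapses the estimate to $C_{a,k}\,X^k$ with $C_{a,k}=B_{n,k}\,C\,n^n A^{n-k}/\big(k^k (n-k)^{n-k}\big)$, a constant depending on $a$ and $k$ but not on $x$; when $X=0$, letting $r\to\infty$ forces $\widehat d^kP(a)(x)=0$, which matches the vanishing right-hand side of the required inequality. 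The case $k=n$ is immediate, since $\widehat d^nP(a)=P$ already has $\mu$ as a Pietsch measure. This shows $\mu$ is a Pietsch measure for every $\widehat d^kP(a)$, hence a differential Pietsch measure.

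The only genuinely delicate point is the coefficient-extraction step as the scalar field varies: for complex scalars it is the trivial Cauchy estimate (one integrates $g$ over the circle $|t|=r$ and gets $B_{n,k}=1$), whereas for real scalars one must know that the coefficient functional is bounded for the sup-norm on $[-1,1]$, which is precisely the finite-dimensionality remark above (equivalently a Chebyshev-type coefficient inequality). This is what makes the single argument go through uniformly in both cases; the remaining optimization over $r$ is routine.
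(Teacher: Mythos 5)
Your argument is correct and takes a genuinely different route from the paper. The paper does \emph{not} try to show that an arbitrary Pietsch measure of $P$ works: it invokes the factorization $P=Q\circ u$ through a $p$-summing linear operator $u$ (from \cite[Proposition 3.4]{BoPeRu}), takes a Pietsch measure $\mu$ of $u$, and gets the estimate for $\widehat d^kP(a)$ by a direct multilinear computation $\|\check Q(u(a)^{n-k},u(x)^k)\|\le\|\check Q\|\,\|u(a)\|^{n-k}\|u(x)\|^k$. That yields existence of \emph{some} differential Pietsch measure, at the cost of depending on an external factorization result (whose known proof the paper itself has to patch in the following Remark). You instead prove the stronger statement that \emph{every} Pietsch measure of $P$ is already differential, by bounding $\sup_{|t|\le r}\|P(a+tx)\|\le C(A+rX)^n$ and extracting the $k$th coefficient of the degree-$n$ slice $t\mapsto P(a+tx)$ with a Chebyshev/Cauchy-type constant, then optimizing over $r$; this is self-contained and gives strictly more information. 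Two small points to tidy up: with the paper's normalization the coefficient of $t^k$ is $\binom{n}{k}\check P(a^{n-k},x^k)=\frac{1}{k!}\widehat d^kP(a)(x)$ rather than $\widehat d^kP(a)(x)$ itself, which only changes your constant by $k!$; and your appeal to Minkowski's inequality requires $p\ge 1$, so for $0<p<1$ (which the statement does not exclude) you should replace it by the $p$-triangle inequality $\|f+g\|_{L_p}^p\le\|f\|_{L_p}^p+\|g\|_{L_p}^p$, which again only alters the constant in $\|g(t)\|\le C'(A+|t|X)^n$.
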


\begin{proof} Let $P \in {\cal P}(^nE;F)$ be a $p$-dominated polynomial. By \cite[Proposition 3.4]{BoPeRu} there are a Banach space $G$, a $p$-summing linear operator $u \colon E \longrightarrow G$ and a continuous $n$-homogeneous polynomial $Q \colon G \longrightarrow F$ such that $P = Q \circ u$. Then $\check P = \check Q \circ (u, \ldots, u)$. Let $\mu$ be a Pietsch measure for $u$ with constant $C$. Then, for $a \in E$ and $k \in \{1, \ldots, n\}$,
\begin{align*} \|\widehat d^kP(a)(x)\| & = \left\|\frac{n!}{(n-k)!}\check P(a^{n-k}, x^k) \right\| =  \frac{n!}{(n-k)!}\cdot \left\|\check Q\left(u(a)^{n-k}, u(x)^k\right) \right\|\\
& \leq \frac{n!}{(n-k)!}\cdot \|\check Q\| \cdot \|u(a)\|^{n-k} \cdot \|u(x)\|^k \\
& \leq \frac{n!}{(n-k)!}\cdot \|\check Q\| \cdot \|u\|^{n-k}\cdot \|a\|^{n-k} \cdot  C^k \cdot \left(\int_{B_{E'}}|\varphi(x)|^p d\mu(\varphi)\right)^\frac{k}{p},
\end{align*}
proving that $\mu$ is a Pietsch measure for $\widehat d^kP(a) $.
\end{proof}


\begin{remark}\rm It is worth mentioning that the proof of \cite[Proposition 3.4]{BoPeRu} we are aware of, namely \cite[Proposition 46(a)]{note}, relies on \cite[Corolario 3.23]{david}, a result whose proof depends on the injectivity of $j_p$. Nevertheless, despite this gap in the proof, \cite[Corolario 3.23]{david} is correct. The following argument, kindly communicated to the authors by E. A.  S\'anchez-P\'erez, fulfills the gap in the proof. Let $E_1, \ldots, E_n, F$ be Banach spaces, $\mu_j \in W(B_{E_j'}, w^*)$ for $j = 1, \ldots, n$, $p_1, \ldots, p_n \geq 0$, and $A \colon E_1 \times \cdots \times E_n \longrightarrow F$ be an $n$-linear mapping for which there is a constant $C > 0$ such that
$$\|A(x_1, \ldots, x_n)\| \leq C \prod\limits_{j=1}^n \left( \int_{B_{E_j'}}|\varphi_j(x_j)|^{p_j}\,d\mu_j(\varphi_j) \right)^{1/p_j}  $$
for all $x_1 \in E_1, \ldots, x_n \in E_n$. We are supposed to show that if $x_j,y_j \in E_j$ for $j = 1, \ldots, n$, are such that $j_{p_j}(e(x_j)) = j_{p_j}(e(y_j))$ in $L_{p_j}(\mu_j)$ for $j = 1, \ldots, n$, then $A(x_1, \ldots, x_n) = A(y_1, \ldots, y_n)$. Indeed, the equality in $L_{p_j}(\mu_j)$ gives
$$\int_{B_{E_j'}} |e(x_j-y_j)(\varphi_j)|^{p_j}\,d\mu_j(\varphi_j) = \int_{B_{E_j'}} |\varphi_j(x_j-y_j)|^{p_j}\,d\mu_j(\varphi_j) = 0 $$
for $j = 1, \ldots, n$. So,
 $$ \|A(x_1, \ldots, x_n) - A(y_1, \ldots, y_n)\|~~~~~~~~~~~~~~~~~~~~~~~~~~~~~~~~~~~~~~~~~~~~~~~~~~~~~~~~~~~~~~~~~~~~~
 ~~~~~~~~~~~~~~~~~~~~~~~~~~~$$
 $$= \left\|\sum_{j=1}^n \left[A(y_1, \ldots, y_{j-1}, x_j ,\ldots, x_n) - A(y_1, \ldots, y_j, x_{j+1}, \ldots, x_n) \right]\right\|~~~~~~~~~~~~~~~~~~~~~~~~~~~~~~~~~~~~~~~~~~~~~~~~~~~~~~~~~~~~~~$$
$$= \left\|\sum_{j=1}^n A(y_1, \ldots, y_{j-1}, x_j-y_j , x_{j+1}\ldots, x_n)\right\|~~~~~~~~~~~~~~~~~~~~~~~~~~~~~~~~~~~~~~~~~~~~~~~~~~~~~~~~~~~~$$
$$\leq \sum_{j=1}^n \left\|A(y_1, \ldots, y_{j-1}, x_j-y_j , x_{j+1}\ldots, x_n)\right\|~~~~~~~~~~~~~~~~~~~~~~~~~~~~~~~~~~~~~~~~~~~~~~~~~~~~~~~~~~~~~~~~~~$$
$$\leq \sum_{j=1}^n C \left[\prod\limits_{k=1, k\neq j}^n \left( \int_{B_{E_k'}}|\varphi_k(z_k)|^{p_k}\,d\mu_k(\varphi_k)\right)^{1/p_k} \right] \cdot\left( \int_{B_{E_j'}} |\varphi_j(x_j-y_j)|^{p_j}\,d\mu_j(\varphi_j) \right)^{1/p_j} =0,$$
where $z_k=y_k$ if $k<j$ and $z_k=x_k$ if $k>j$.
\end{remark}

\bigskip

The next result shall be useful in this section and in Section 4 too.

\begin{proposition}\label{prop2} Let $\mu \in  W(B_{E'},w^*)$ and $0 < p < \infty$. If $x,y \in E$ are such that $j_p^e(e(x)) = j_p^e(e(y))$, then $P(x) = P(y)$ for every $p$-dominated polynomial $P \in {\cal P}(^nE;F)$ (regardless of the $n \in \mathbb{N}$ and the Banach space $F$) having $\mu$ as a differential Pietsch measure.
\end{proposition}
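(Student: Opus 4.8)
The plan is to reduce the desired equality $P(x)=P(y)$ to the vanishing of the derivatives of $P$ at the point $y$, which is exactly the information encoded in the differential Pietsch measure hypothesis. First I would translate the assumption into analytic form. Since $e$ and $j_p$ are linear, the equality $j_p^e(e(x)) = j_p^e(e(y))$ is the same as $j_p^e(e(x-y)) = 0$ in $L_p(\mu)$, which for $0 < p < \infty$ means precisely
$$\int_{B_{E'}} |\varphi(x-y)|^p \, d\mu(\varphi) = 0$$
(this is the scalar computation already recorded in the preceding Remark). Writing $w := x - y$, the whole problem becomes: this single scalar condition must force $P(x) = P(y)$.

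Next I would expand $P(x)$ about $y$ using the symmetric $n$-linear form $\check P$. Since $x = y + w$, multilinearity and symmetry of $\check P$ yield the binomial-type identity
$$P(x) = \check P\big((y+w)^n\big) = \sum_{k=0}^{n} \binom{n}{k} \check P\big(y^{n-k}, w^k\big),$$
whose $k=0$ summand is exactly $\check P(y^n) = P(y)$. Thus it suffices to prove that every summand with $k \geq 1$ vanishes, that is, $\check P(y^{n-k}, w^k) = 0$ for $k = 1, \ldots, n$.

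This is where the hypothesis enters, and it fits the expansion term by term. Each mixed term is, up to a positive constant, a derivative of $P$ evaluated at $w$: by the formula $\widehat d^kP(y)(w) = \frac{n!}{(n-k)!}\check P(y^{n-k}, w^k)$, controlling $\check P(y^{n-k}, w^k)$ amounts to controlling $\widehat d^kP(y)(w)$. Since $\mu$ is a differential Pietsch measure for $P$, it is in particular a Pietsch measure for the $k$-homogeneous polynomial $\widehat d^kP(y)$, so there is a constant $C_k > 0$ with
$$\big\|\widehat d^kP(y)(w)\big\| \leq C_k \left(\int_{B_{E'}} |\varphi(w)|^p \, d\mu(\varphi)\right)^{k/p}.$$
Because the integral is $0$ and the exponent $k/p$ is strictly positive, the right-hand side is $0$; hence $\widehat d^kP(y)(w) = 0$ and therefore $\check P(y^{n-k}, w^k) = 0$ for every $k \in \{1, \ldots, n\}$. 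Substituting back into the expansion collapses it to its $k=0$ term, giving $P(x) = P(y)$.

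I expect the one point requiring care to be conceptual rather than computational: recognizing that the natural expansion of $P(y+w)$ produces precisely the mixed terms $\check P(y^{n-k}, w^k)$, each proportional to a derivative $\widehat d^kP(y)(w)$ taken at the \emph{single} base point $a = y$. This is exactly why the hypothesis is formulated as a differential Pietsch measure covering all orders $k = 1, \ldots, n$, rather than a mere Pietsch measure for $P$ itself: each order $k$ is needed to annihilate one layer of the expansion. The remaining verifications — the validity of the multilinear binomial identity and the equivalence of the $L_p$-equality with the vanishing of the integral — are routine.
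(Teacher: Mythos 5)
Your argument is correct and is essentially identical to the paper's own proof: both translate the hypothesis into $\int_{B_{E'}}|\varphi(x-y)|^p\,d\mu = 0$, expand $P(x)-P(y)$ via the multilinear binomial identity into terms proportional to $\widehat d^kP(y)(x-y)$, and kill each term using the Pietsch domination for the derivatives at the single base point $y$. The only difference is a harmless reindexing of the sum.
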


\begin{proof} The assumption $j_p^e(e(x)) = j_p^e(e(y))$ says that $e(x-y)$ is null $\mu$-almost everywhere, so
$$\int_{B_{E'}}|\varphi(x-y)|^p d\mu(\varphi) = \int_{B_{E'}}|e(x-y)(\varphi)|^p d\mu(\varphi) =0.$$
 Let $P \in {\cal P}(^nE;F)$ be a $p$-dominated polynomial having $\mu$ as a differential Pietsch measure. By definition we know that $\mu$ is a Pietsch measure for $\widehat d^kP(y)$ for every $k = 1, \ldots, n$, say with constant $C_k$.
By \cite[Lemma 1.9]{din} we know that
\begin{align*}
P(x)-P(y)&=\sum_{k=0}^{n-1} \binom{n}{k} \check{P}(y^k,(x-y)^{n-k}) = \sum_{k=0}^{n-1} \frac{\widehat d^{n-k}P(y)(x-y)}{(n-k)!}.
\end{align*}
Therefore,
\begin{align*} \|P(x) - P(y)\| &\leq \sum_{k=0}^{n-1} \frac{\|\widehat d^{n-k}P(y)(x-y)\|}{(n-k)!} \\&\leq \sum_{k=0}^{n-1}\frac{C_{n-k}}{(n-k)!}\cdot \left(\int_{B_{E'}}|\varphi(x-y)|^p d\mu(\varphi)\right)^\frac{n-k}{p}=0,
\end{align*}
proving that $P(x) = P(y)$.
\end{proof}

Now we can prove that if $j_p^e$ fails to be injective, then dominated homogeneous polynomials having $\mu$ as a differential Pietsch measure are never injective. As we remarked before, this result is of some interest only for $n$ odd and real scalars.

%
%
\begin{proposition} \label{propos} Let $n \in \mathbb{N}$ odd, $p \geq n$, $E$ be a real Banach space and $\mu \in W(B_{E'},w^*)$ be given. Then $j_p^e$ is injective if and only if $\mu$ is a differential Pietsch measure for some injective $p$-dominated $n$-homogeneous polynomial defined on $E$.
\end{proposition}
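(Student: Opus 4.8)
The plan is to establish the two implications separately, leaning on Propositions~\ref{TH}, \ref{prop1}, and \ref{prop2}. The easy direction is the ``only if'' part. If $j_p^e$ is injective, then by Proposition~\ref{TH} there is an injective $p$-summing linear operator $u\colon E\longrightarrow F$ having $\mu$ as a Pietsch measure. The idea is to manufacture an injective $p$-dominated $n$-homogeneous polynomial from $u$. Since $n$ is odd and we work with real scalars, I would compose $u$ with an injective $n$-homogeneous polynomial on the target side, exploiting the fact (recalled in the excerpt) that the map $(a_j)_j\mapsto(a_j^n)_j$ is injective for odd $n$. Concretely, I would embed $F$ suitably and set $P=Q\circ u$, where $Q$ is chosen $n$-homogeneous and injective so that $P$ remains injective (injectivity of $P$ follows from injectivity of both $u$ and $Q$); the composition $Q\circ u$ is automatically $p$-dominated because a continuous $n$-homogeneous polynomial composed with a $p$-summing operator is $p$-dominated. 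The proof of Proposition~\ref{prop1} then shows $\mu$ is a differential Pietsch measure for $P$, since that argument only used a factorization $P=Q\circ u$ with $u$ $p$-summing having $\mu$ as Pietsch measure.

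For the ``if'' direction, suppose $\mu$ is a differential Pietsch measure for some injective $p$-dominated $n$-homogeneous polynomial $P\colon E\longrightarrow F$. I would prove $j_p^e$ is injective directly. Take $x,y\in E$ with $j_p^e(e(x))=j_p^e(e(y))$; the goal is $x=y$. By Proposition~\ref{prop2}, applied to $P$ (which has $\mu$ as a differential Pietsch measure), we immediately get $P(x)=P(y)$. Since $P$ is injective, this forces $x=y$, establishing the injectivity of $j_p^e$. This direction is short precisely because Proposition~\ref{prop2} was set up to do the heavy lifting: it converts equality in $L_p(\mu)$ into equality of polynomial values whenever $\mu$ is a differential Pietsch measure.

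The main obstacle is the forward direction, specifically arranging that the candidate polynomial is simultaneously injective and has $\mu$ as a \emph{differential} Pietsch measure. The delicate point is that injectivity of $P=Q\circ u$ requires both $Q$ and $u$ injective, so I must choose $Q$ injective and $n$-homogeneous (the coordinatewise $n$th-power map is the natural choice), while at the same time the factorization through the $p$-summing operator $u$ must be preserved so that the estimate in Proposition~\ref{prop1} applies verbatim to yield the differential Pietsch property. A subtlety worth checking is that one needs $u$ to land in a space admitting such an injective $Q$; since the $n$th-power map lives on $\ell_\infty$ (or more generally $c_0$-type spaces), I would route $u$ through an injective $p$-summing operator into a sequence space, or alternatively observe that any injective $u$ followed by a fixed injective $n$-homogeneous polynomial on its range suffices, being careful that the hypothesis $p\geq n$ guarantees $p/n\geq 1$ so that the relevant $\ell_{p/n}$ norms are genuine norms and the domination inequalities make sense.
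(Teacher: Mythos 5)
Your converse direction is exactly the paper's: Proposition~\ref{prop2} gives $P(x)=P(y)$ from $j_p^e(e(x))=j_p^e(e(y))$, and injectivity of $P$ finishes. Your forward direction has the right architecture but leaves the one genuinely delicate point unresolved: which injective $n$-homogeneous $Q$, on which target space? Your two suggestions (routing $u$ into a sequence space, or appealing to ``a fixed injective $n$-homogeneous polynomial on its range'') are respectively unavailable in general and circular; not every injective $p$-summing $u$ with Pietsch measure $\mu$ lands somewhere carrying an injective $n$-homogeneous polynomial. The resolution is to commit to the specific operator produced in the proof of Proposition~\ref{TH}, namely $u=j_p^e\circ e\colon E\longrightarrow L_p(\mu)$, and take $Q(f)=f^n$ mapping $L_p(\mu)$ into $L_{p/n}(\mu)$ (a Banach space since $p\geq n$); for $n$ odd and real scalars $f^n=g^n$ $\mu$-a.e.\ forces $f=g$ $\mu$-a.e., so $Q$ is injective, and $\|\check Q\|\leq 1$ by H\"older. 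With this choice $P=Q\circ u$ is precisely the paper's polynomial $(j_{p/n}^e)^n\circ e$. Granting that, your way of verifying the differential Pietsch property --- observing that the proof of Proposition~\ref{prop1} only uses a factorization $P=Q\circ u$ through a $p$-summing $u$ with Pietsch measure $\mu$, which you now have by construction --- is legitimate and a bit slicker than the paper's route, which instead computes $\|\widehat d^kP(a)(x)\|_{p/n}$ directly from the integral formula and the inequality $kp/n\leq p$. So: same construction as the paper once the target space is pinned down, identical converse, and a mildly cleaner verification of the differential Pietsch property; the only thing to fix is to name $L_p(\mu)$ and the pointwise $n$th power explicitly instead of gesturing at sequence spaces.
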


\begin{proof} Assume that $j_p^e$ is injective. Since $L_{p/n}(\mu)$ is a Banach space, we can consider the $n$-homogeneous polynomial
 $$(j_{p/n}^e )^n \colon e(E) \longrightarrow L_{p/n}(\mu)~,~(j_{p/n}^e )^n(f) = j_{p/n}^e(f^n). $$
Letting $i_{n,p} \colon L_p(\mu) \longrightarrow L_{p/n}(\mu)$ denote the formal inclusion, we have $(j_{p/n}^e )^n  = i_{n,p} \circ (j_{p}^e )^n $. Since $e, j_p^e, i_{n,p}$ are injective and $n$ is odd, it follows that the $n$-homogeneous polynomial $P:=(j_{p/n}^e )^n \circ e \colon E \longrightarrow L_{p/n}(\mu)$ is injective. Let us see that $\mu$ is a differential Pietsch measure for $P$. First observe that, for $x_1, \ldots, x_n \in E$,
$$\check{P}(x_1, \ldots, x_n) = j_{p/n}^e(e(x_1)\cdot e(x_2) \cdots e(x_n)). $$
Given $a \in E$ and $k \in \{1, \ldots, n\}$, since $\frac{kp}{n} \leq p$ we have
\begin{align*}\left\|\widehat d^k P(a)(x)\right\|_{p/n}&= \left\|\frac{n!}{(n-k)!}\check P(a^{n-k}, x^k) \right\|_{p/n}\\& = \frac{n!}{(n-k)!}\cdot \left(\int_{B_{E'}}\left| [j_{p/n}^e(e(a)^{n-k}\cdot e(x)^k)](\varphi)\right|^{p/n}d\mu(\varphi) \right)^{n/p}\\&
= \frac{n!}{(n-k)!}\cdot \left(\int_{B_{E'}}\left| \varphi(a)^{n-k}\cdot \varphi(x)^k\right|^{p/n}d\mu(\varphi) \right)^{n/p}\\&
\leq \frac{n!}{(n-k)!}\cdot \sup_{\varphi \in B_{E'}}|\varphi(a)|^{n-k}\cdot \left(\int_{B_{E'}}\left|\varphi(x)\right|^{kp/n}d\mu(\varphi) \right)^{n/p}\\&
\leq \frac{n!}{(n-k)!}\cdot \|a\|^{n-k}\cdot \left(\int_{B_{E'}}\left|\varphi(x)\right|^pd\mu(\varphi) \right)^{k/p},
\end{align*}
for every $x \in E$, proving that $\mu$ is a Pietsch measure for $\widehat d^k P(a)$.

Conversely, let  $F$ be a real Banach space and $P \in {\cal P}(^nE;F)$  be an injective $p$-dominated polynomial having $\mu$ as a differential Pietsch measure. If $x,y \in E$ are such that $j_p^e(e(x)) = j_p^e(e(y))$, by Proposition \ref{prop2} we have that $P(x) = P(y)$, and by the injectivity of $P$ it follows that $x = y$.
\end{proof}

\begin{remark}\rm Note that the condition $p \geq n$ can be dropped in the \textquotedblleft if\textquotedblright part of the proposition above.
\end{remark}


\section{Pietsch factorization of dominated polynomials}
In \cite{BoPeRu, BoPeRuLAMA} we state some results concerning Pietsch-type factorization of dominated polynomials. The proofs of \cite[Proposition 4.2, Theorem 4.4]{BoPeRu} and \cite[Theorem 2.1, Proposition 3.3]{BoPeRuLAMA} use, in one way or another, the injectivity of $j_p^e$. But, as we proved in Section 2, $j_p^e$ may fail to be injective. In this section we use Propositions \ref{prop1} and \ref{prop2} to provide these results with proofs that do not depend on the injectivity of $j_p^e$.

Let $P \in {\cal P}(^nE;F)$ be a $p$-dominated polynomial. The infimum of the constants $C$ satisfying (\ref{dominationpolynomial}) is denoted by $\|P\|_{d,p}$. It is well known that $\|\cdot \|_{d,p}$ is a norm if $p \geq n$ and a $\frac{p}{n}$-norm if $p < n$. In this section we restrict ourselves to the normed case. So, henceforth $n$ is a positive integer, $p$ is a real number with $p \geq n$, $E$ is a Banach space and $\mu \in W(B_{E'},w^*)$ is a given measure. Consider the continuous $n$-homogeneous polynomial
$$j_{p/n}^n \colon
C(B_{E'})\longrightarrow L_{p/n}(\mu)~,~j_{p/n}^n(f)=j_{p/n}(f^n).$$
  The restriction
of $j_{p/n}^n$ to $e(E)$ will be denoted $(j_{p/n}^e)^n$, so
$(j_{p/n}^e)^n \colon e(E) \longrightarrow E^{p/n}$ where
$E^{p/n}:={\rm SPAN}((j_{p/n}^e)^n\circ e(E)) \subseteq L_{p/n}(\mu)$, where ${\rm SPAN}$ denotes the linear hull. Define $G_{E\!,p}^\mu:=\overline{j_p^e\circ e (E)}\subset L_p(\mu)$.

 Let $\otimes^{n,s}_{\pi_s}E$  denote the $n$-fold symmetric tensor product of $E$ endowed with the
projective $s$-tensor norm $\pi_s$, and let $\widehat \otimes^{n,s}_{\pi_s}E$ denote its completion (see \cite{klaus} for
definitions and main properties). For simplicity, an elementary symmetric tensor $x \otimes \cdots \otimes x \in \otimes^{n,s}_{\pi_s}E$ shall be denoted by $\otimes^n x$. Given $P \in {\mathcal P}(^nE;F)$ let
$P_{L,s}$ be the linearization of $P$ , that is, $P_{L,s}$ is a linear
operator from $\widehat \otimes^{n,s}_{\pi_s}E$ into $F$ such that
$$P(x) = P_{L,s}(\otimes^n x) {\rm~for~every~} x \in E.$$

According to \cite{BoPeRu}, there is an injective linear operator
$\delta \colon \otimes^{n,s}_{\pi_s}E \longrightarrow C(B_{E'})$
such that $\delta (\otimes^n x)(x^*)= \langle x^*,x
\rangle^n$ for any $x\in E$ and $x^*\in B_{E'}$, and
$$j_{p/n}\circ\delta(\otimes^{n,s}_{\pi_s}E)={\rm SPAN}((j_{p/n}^e)^n\circ
e(E))= E^{p/n}
.$$ In order to introduce a convenient renorming on $E^{p/n}$, which is slightly different from the renorming defined in \cite{BoPeRu}, we introduce two auxiliary maps:\\
$\bullet$ By $T$ we denote the symmetric $n$-fold tensor product of
the linear operator $j_p\circ e$. So, $T \colon \otimes^{n,s}E \longrightarrow \otimes^{n,s} j_p \circ e(E)$ is a linear operator
satisfying
$$T(\otimes^n x ) = \otimes^n j_p(e(x)) {\rm~for~every~} x \in E.$$
$\bullet$ Consider the $n$-homogeneous polynomial
$$Q \colon  j_p(e(E))\subseteq L_p(\mu)\longrightarrow L_{p/n}(\mu)~,~ Q(g)=g^n,$$
and let $Q_{L,s}\colon \otimes_{n,s}^{\pi_s} j_p(e(E))\longrightarrow  L_{p/n}(\mu)$ denote the linearization of $Q$.

A norm $\pi_{p/n}$ is
defined on $E^{p/n}$ by:
$$
\pi_{p/n}((j_{p/n} \circ \delta)(\theta)) := \inf \left \{\sum_{i=1}^m |\lambda_i|\cdot\|(j_{p/n} \circ
\delta)(\otimes^n x_i)\|_{L_{p/n}}\right\},
$$
where the infimum is taken over all representations of $T(\theta) \in \otimes^{n,s}_{\pi_s}j_p\circ e(E)$ of the
form $T(\theta) = \sum_{i=1}^m \lambda_i \otimes^n j_p\circ e(x_i)$ with $m \in \mathbb{N}$,
$\lambda_i \in \mathbb{K}$ and $x_i \in E$. Our first task is to prove that this map is well-defined and is indeed a norm on $E^{p/n}$. The following lemma shall be useful in the next two results.

\begin{lemma}\label{lemanovo} $Q_{L,s} \circ T = j_{p/n}\circ \delta$.
\end{lemma}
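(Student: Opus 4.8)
The plan is to exploit that both $Q_{L,s}\circ T$ and $j_{p/n}\circ\delta$ are \emph{linear} maps defined on the (algebraic) symmetric tensor product $\otimes^{n,s}E$. Since the elementary symmetric tensors $\otimes^n x$, $x\in E$, span $\otimes^{n,s}E$, it suffices by linearity to verify that the two maps agree on each $\otimes^n x$. Note that no density or continuity argument is needed here, because the common domain is the algebraic tensor product rather than its completion.

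First I would evaluate the left-hand side. By the defining property of $T$ we have $T(\otimes^n x)=\otimes^n j_p(e(x))$. Since $Q_{L,s}$ is the linearization of $Q$ and $Q(g)=g^n$, it carries the elementary tensor $\otimes^n g$ to $Q(g)=g^n$; hence
$$Q_{L,s}(T(\otimes^n x))=Q_{L,s}\big(\otimes^n j_p(e(x))\big)=\big(j_p(e(x))\big)^n\in L_{p/n}(\mu).$$
For the right-hand side, $\delta(\otimes^n x)$ is by definition the continuous function $\varphi\mapsto\langle\varphi,x\rangle^n$ on $B_{E'}$, and applying $j_{p/n}$ yields its class in $L_{p/n}(\mu)$.

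It then remains to identify these two elements of $L_{p/n}(\mu)$. The element $j_p(e(x))$ is the $L_p(\mu)$-class of the function $\varphi\mapsto e(x)(\varphi)=\varphi(x)$, so its $n$-th power is the $L_{p/n}(\mu)$-class of $\varphi\mapsto\varphi(x)^n$; on the other hand $\delta(\otimes^n x)$ is precisely the continuous function $\varphi\mapsto\varphi(x)^n$, whose $j_{p/n}$-image is the same $L_{p/n}(\mu)$-class. Thus both sides equal the class of $\varphi\mapsto\varphi(x)^n$, which completes the verification on elementary tensors and hence proves the lemma. The only step requiring care --- and the main (modest) obstacle --- is this last identification: one must check that raising the $L_p$-class $j_p(e(x))$ to the $n$-th power commutes with passing through the continuous representative, that is, that $\big(j_p(f)\big)^n=j_{p/n}(f^n)$ for $f\in C(B_{E'})$. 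This is simply the compatibility of the canonical maps $j_p$ and $j_{p/n}$, both being induced by the same pointwise inclusion of continuous functions into the $L_r(\mu)$ spaces, so that pointwise powers are respected when passing to equivalence classes.
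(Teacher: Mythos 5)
Your proof is correct and follows essentially the same route as the paper's: both arguments reduce, by linearity, to checking the identity on elementary symmetric tensors $\otimes^n x$ and then verify pointwise that each side is the $L_{p/n}(\mu)$-class of $\varphi\mapsto\varphi(x)^n$. The compatibility $(j_p(f))^n=j_{p/n}(f^n)$ that you flag as the one delicate step is exactly what the paper handles by evaluating at each $\varphi\in B_{E'}$.
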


\begin{proof} Given $k \in \mathbb{N}$, $x_1, \ldots, x_k \in E$, $\lambda_1,\ldots,\lambda_k$ scalars and $\varphi \in B_{E'}$,
\begin{align*}Q_{L,s} \circ T\left(\sum_{j=1}^k \lambda_j \otimes^n x_j  \right)(\varphi) &= Q_{L,s}\left(\sum_{j=1}^k \lambda_j \otimes^n j_p(e(x_j)) \right)(\varphi)\\&= \sum_{j=1}^k \lambda_jQ\left( j_p(e(x_j)) \right)(\varphi)=\sum_{j=1}^k\lambda_j \left[\left( j_p(e(x_j)) \right)\right]^n(\varphi)\\&=\sum_{j=1}^k \lambda_j\left[\left( j_p(e(x_j))\right)(\varphi)\right]^n=\sum_{j=1}^k\lambda_j \varphi(x_j)^n \\&=\sum_{j=1}^k \lambda_j \delta(\otimes^n x_j)(\varphi)= j_{\frac{p}{n}} \circ \delta\left(\sum_{j=1}^k \lambda_j \otimes^n x_j  \right)(\varphi).
\end{align*}
\end{proof}

\begin{proposition}{\rm \cite[Proposition 4.2]{BoPeRu}} $\pi_{\frac{p}{n}}$ is norm on $E^{\frac{p}{n}}$.
\end{proposition}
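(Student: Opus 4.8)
The plan is to show first that the defining infimum depends only on the element $w:=(j_{p/n}\circ\delta)(\theta)\in E^{p/n}$ and not on the chosen preimage $\theta$, and only then to verify the three norm axioms. The tool for the first step is Lemma \ref{lemanovo}. Since $j_{p/n}\circ\delta=Q_{L,s}\circ T$, applying $Q_{L,s}$ turns a representation $T(\theta)=\sum_{i=1}^m\lambda_i\otimes^n(j_p\circ e(x_i))$ into a representation $w=\sum_{i=1}^m\lambda_i\,(j_p(e(x_i)))^n$ in $L_{p/n}(\mu)$, and conversely any such representation of $w$ is produced by $\theta=\sum_{i=1}^m\lambda_i\otimes^n x_i$. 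Moreover, by the Lemma, $(j_{p/n}\circ\delta)(\otimes^n x_i)=Q_{L,s}(\otimes^n j_p(e(x_i)))=(j_p(e(x_i)))^n$, so that $\|(j_{p/n}\circ\delta)(\otimes^n x_i)\|_{L_{p/n}}=\|j_p(e(x_i))\|_{L_p}^n$. Reading the infimum as ranging over all $\theta$ with $(j_{p/n}\circ\delta)(\theta)=w$ together with all their representations, these correspondences identify it with
\[
\pi_{p/n}(w)=\inf\Big\{\textstyle\sum_{i=1}^m|\lambda_i|\,\|j_p(e(x_i))\|_{L_p}^n \ :\ w=\textstyle\sum_{i=1}^m\lambda_i\,(j_p(e(x_i)))^n \text{ in } L_{p/n}(\mu)\Big\},
\]
an expression from which $\theta$ has disappeared. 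This establishes that $\pi_{p/n}$ is well defined on $E^{p/n}$.

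Next I would record that every $w\in E^{p/n}$ admits at least one such representation, which is immediate since $E^{p/n}=\mathrm{SPAN}((j_{p/n}^e)^n\circ e(E))$ and $(j_{p/n}^e)^n(e(x))=(j_p(e(x)))^n$; hence the infimum is taken over a nonempty set and $\pi_{p/n}(w)<\infty$. Absolute homogeneity and subadditivity then follow routinely from the representation form above: scaling all coefficients $\lambda_i$ by a scalar $\alpha$ carries a representation of $w$ into one of $\alpha w$ and multiplies the weight-sum by $|\alpha|$, while concatenating representations of $w_1$ and $w_2$ yields a representation of $w_1+w_2$ whose weight-sum is the sum of the two.

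The remaining and genuinely analytic point is positive definiteness, and this is where the hypothesis $p\geq n$ enters. Since $p\geq n$ gives $p/n\geq 1$, the space $L_{p/n}(\mu)$ is normed and Minkowski's inequality is available. Writing $h_i:=j_p(e(x_i))\in L_p(\mu)$, for any representation $w=\sum_i\lambda_i h_i^{\,n}$ I would estimate
\[
\|w\|_{L_{p/n}}\ \leq\ \sum_{i}|\lambda_i|\,\|h_i^{\,n}\|_{L_{p/n}}\ =\ \sum_i|\lambda_i|\,\|h_i\|_{L_p}^n,
\]
using $\|h^n\|_{L_{p/n}}=\big(\int_{B_{E'}}|h|^p\,d\mu\big)^{n/p}=\|h\|_{L_p}^n$. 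Taking the infimum over all representations of $w$ gives $\|w\|_{L_{p/n}}\leq\pi_{p/n}(w)$, so $\pi_{p/n}(w)=0$ forces $w=0$ in $L_{p/n}(\mu)$, hence in $E^{p/n}$.

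The main obstacle I anticipate is conceptual rather than computational, and it lives in the first paragraph. One must resist the tempting (but false) claim that $T(\theta)$ is pinned down by $w$: because $j_p^e$ need not be injective, neither $T$ nor $Q_{L,s}$ need be injective on the relevant tensor product, and $\pi_{p/n}(w)$ is \emph{not} the projective symmetric tensor norm of a single $T(\theta)$. This is precisely the point at which the earlier proofs silently used injectivity of $j_p^e$. The $\theta$-free reformulation furnished by Lemma \ref{lemanovo}, in which one infimizes over \emph{all} representations of $w$, is what circumvents the gap; once it is in place, the norm axioms, including the $p\geq n$ step for definiteness, are straightforward.
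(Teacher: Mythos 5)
Your argument is correct and shares its two essential ingredients with the paper's proof: Lemma \ref{lemanovo} together with the identity $\|(j_{p/n}\circ\delta)(\otimes^n x)\|_{L_{p/n}}=\|j_p(e(x))\|_{L_p}^n$, and the lower bound $\|w\|_{L_{p/n}}\le\pi_{p/n}(w)$ for positive definiteness. Your Minkowski estimate is exactly the paper's duality argument (testing against $\phi\in B_{L_{p/n}(\mu)'}$ and using that the dual of $L_{p/n}(\mu)$, $p\ge n$, is norming) with Hahn--Banach stripped out, and both proofs use $p\ge n$ at precisely this spot. The genuine divergence is in how the infimum is read. The paper keeps it attached to the fixed tensor $T(\theta)\in\otimes^{n,s}j_p\circ e(E)$, so that $\pi_{p/n}((j_{p/n}\circ\delta)(\theta))=\pi_s(T(\theta))$ as in (\ref{iso}), and then imports homogeneity and subadditivity from the fact that $\pi_s$ is a norm; you instead let the infimum range over \emph{all} representations $w=\sum_i\lambda_i(j_p(e(x_i)))^n$ in $L_{p/n}(\mu)$, i.e.\ over the whole fiber $Q_{L,s}^{-1}(w)$, which makes independence of $\theta$ automatic and obliges you to check the axioms by hand (which you do correctly). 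What your reading buys is a definition with no well-definedness issue at all; what it costs is that your $\pi_{p/n}(w)$ is the quotient seminorm $\inf\{\pi_s(\tau): \tau\in\otimes^{n,s}j_p\circ e(E),\ Q_{L,s}(\tau)=w\}$, hence only a priori $\le\pi_s(T(\theta))$, with equality exactly when $Q_{L,s}$ is injective on $\otimes^{n,s}j_p(e(E))$ --- the very injectivity you rightly decline to assume. For the present proposition this is harmless (a quotient of $\pi_s$ dominating the $L_{p/n}$-norm is still a norm), but it is not harmless for (\ref{iso}) and Theorem \ref{th}, where $Q_{L,s}$ must be an isometry rather than a metric surjection; if you keep your reading, state explicitly that you are proving the result for the quotient norm and treat the isometry claim separately.
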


\begin{proof}
 Note first that  $\pi_{p/n}$ is well-defined. Indeed, if $T(\theta_1)=T(\theta_2)$, by Lemma \ref{lemanovo} we have
$$
j_{p/n}\circ \delta (\theta_1)=Q_{L,s}\circ T(\theta_1)=Q_{L,s}\circ T(\theta_2)=j_{p/n}\circ \delta (\theta_2).
$$
Let us see that $\pi_{p/n}(j_{p/n}\circ \delta(\theta))=0$ implies $j_{p/n}\circ \delta(\theta)=0$. Let $\epsilon>0$. If $\pi_{p/n}(j_{p/n}\circ \delta(\theta))=0$, then there is a representation $T(\theta) = \sum_{i=1}^m \lambda_i \otimes^n j_p\circ e(x_i)$ such that $$\sum_{i=1}^m|\lambda_i| \cdot \|j_{p/n}\circ \delta(\otimes^n x_i)\|_{L_{p/n}(\mu)}\leq \epsilon.$$ Then, again by Lemma \ref{lemanovo},
$$
j_{p/n}\circ \delta(\theta)=Q_{L,s}\circ T(\theta) = \sum_{i=1}^m \lambda_i Q_{L,s}(\otimes^n j_p\circ e(x_i) )= \sum_{i=1}^m \lambda_i j_{p/n}\circ \delta\circ \delta_m(x_i).
$$
Hence, for all $\phi\in B_{L_{p/n}(\mu)'}$,
$$
\left|\phi\circ j_{p/n}\circ \delta(\theta)\right|= \left|\sum_{i=1}^m\lambda_i \phi(j_{p/n}\circ \delta(\otimes^n x_i)\right| \leq \epsilon \|\phi\|.
$$
As $\epsilon>0$ is arbitrary, we conclude that $\phi\circ j_{p/n}\circ \delta(\theta) =0$ for all $\phi \in B_{L_{p/n}(\mu)'}$. As $p\geq n$ we can conclude that  $ j_{p/n}\circ \delta(\theta) =0$.

For every $x \in E$,
\begin{equation*}
\|j_p \circ e(x)\|_{L_p}^n = \left(\int_{B_{E'}}|\varphi(x)|^p d\mu(\varphi)\right)^{\frac{n}{p}} = \|j_{\frac{p}{n}} \circ \delta(\otimes^n x )\|_{L_{\frac{p}{n}}}.
\end{equation*}
So,  for $\theta \in \otimes^{n,s}E$ we have
\begin{align*}
\pi_s(T(\theta))&=\inf \left\{ \sum_{i = 1}^m |\lambda_i|\cdot \|j_p\circ e(x_i)\|_{L_p}^n :T(\theta) = \sum_{i=1}^m \lambda_i \otimes^n j_p\circ e(x_i) \right\}\\
 &= \inf \left\{ \sum_{i = 1}^m |\lambda_i|\cdot \|j_{\frac{p}{n}} \circ \delta(\otimes^n x_i )\|_{L_{\frac{p}{n}}} : T(\theta) = \sum_{i=1}^m \lambda_i \otimes^n j_p\circ e(x_i)
\right\}\\
 &= \pi_{\frac{p}{n}}(j_{\frac{p}{n}} \circ \delta(\theta)).
\end{align*}
It follows from the equality
\begin{equation}\label{iso} \pi_s(T(\theta)) = \pi_{\frac{p}{n}}(j_{\frac{p}{n}} \circ \delta(\theta)),
\end{equation}
from the linearity of the operators $j_{\frac{p}{n}}, \delta, T$ and from the fact that $\pi_s$ is a norm that $\pi_{\frac{p}{n}}$ fulfills the remaining norm axioms.
\end{proof}


\begin{theorem}{\rm \cite[Theorem 2.1]{BoPeRuLAMA}}\label{th} Let $E$ be a Banach space and $\mu$ be a regular Borel probability measure on $B_{E'}$. Then $\hat\otimes^{n,s}_{\pi_s} G_{E\!,p}^\mu$ is isometrically isomorphic to the completion $\hat E^{p/n}$ of the subspace $E^{p/n}$ of $L_{p/n}(\mu)$ with respect to the norm $\pi_{p/n}$.
\end{theorem}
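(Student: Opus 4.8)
The plan is to produce an explicit isometric isomorphism on a dense subspace and then extend it to the completions. The natural candidate is the linearization $Q_{L,s}$ of the power polynomial $Q(g)=g^n$, restricted to the subspace $\otimes^{n,s} j_p\circ e(E)$ of $\hat\otimes^{n,s}_{\pi_s} G_{E\!,p}^\mu$. First I would compute its action on elementary symmetric tensors: since
\[
Q_{L,s}\bigl(\otimes^n j_p(e(x))\bigr)=Q\bigl(j_p(e(x))\bigr)=\bigl(j_p(e(x))\bigr)^n=(j_{p/n}^e)^n(e(x)),
\]
the two outer terms being equal in $L_{p/n}(\mu)$ because both are the class of $\varphi\mapsto\varphi(x)^n$, and since these elements span $E^{p/n}$, the linear map $Q_{L,s}$ carries $\otimes^{n,s} j_p\circ e(E)$ onto $E^{p/n}$. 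Moreover, because $T(\otimes^n x)=\otimes^n j_p(e(x))$ and elementary symmetric tensors span, $T$ maps $\otimes^{n,s}E$ onto $\otimes^{n,s} j_p\circ e(E)$, so every $\eta\in\otimes^{n,s} j_p\circ e(E)$ can be written as $\eta=T(\theta)$ for some $\theta\in\otimes^{n,s}E$.

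The isometry is then immediate by combining Lemma \ref{lemanovo} with the identity (\ref{iso}) already available. For $\eta=T(\theta)$ as above,
\[
\pi_{p/n}\bigl(Q_{L,s}(\eta)\bigr)=\pi_{p/n}\bigl(Q_{L,s}(T(\theta))\bigr)=\pi_{p/n}\bigl(j_{p/n}\circ\delta(\theta)\bigr)=\pi_s(T(\theta))=\pi_s(\eta),
\]
so $Q_{L,s}$ is a surjective linear isometry from $(\otimes^{n,s} j_p\circ e(E),\pi_s)$ onto $(E^{p/n},\pi_{p/n})$; in particular it is injective, since $\pi_s$ is a genuine norm. It then remains to pass to completions. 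On the target side $E^{p/n}$ is by definition dense in $\hat E^{p/n}$. On the source side I would argue that $\otimes^{n,s} j_p\circ e(E)$ is dense in $\hat\otimes^{n,s}_{\pi_s} G_{E\!,p}^\mu$: since $j_p\circ e(E)$ is dense in $G_{E\!,p}^\mu=\overline{j_p\circ e(E)}$, every continuous symmetric $n$-linear form on $j_p\circ e(E)$ extends uniquely and isometrically to $G_{E\!,p}^\mu$ by uniform continuity, whence the norm $\pi_s$ on $\otimes^{n,s} j_p\circ e(E)$ coincides with the one induced from $\otimes^{n,s}_{\pi_s} G_{E\!,p}^\mu$, and the elementary tensors $\otimes^n g$ with $g\in j_p\circ e(E)$ are dense among those with $g\in G_{E\!,p}^\mu$. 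Consequently the completion of $(\otimes^{n,s} j_p\circ e(E),\pi_s)$ is exactly $\hat\otimes^{n,s}_{\pi_s} G_{E\!,p}^\mu$. Since a surjective linear isometry between normed spaces extends uniquely to an isometric isomorphism of their completions, $Q_{L,s}$ extends to the desired isometric isomorphism $\hat\otimes^{n,s}_{\pi_s} G_{E\!,p}^\mu\cong\hat E^{p/n}$.

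The step I expect to be the main obstacle is the identification of the two completions in the last paragraph, that is, making rigorous that completing the symmetric projective tensor product of the dense subspace $j_p\circ e(E)$ reproduces $\hat\otimes^{n,s}_{\pi_s} G_{E\!,p}^\mu$ with the correct norm. This is precisely where one must check that the $\pi_s$ appearing in the definition of $\pi_{p/n}$ (computed through representations with vectors of $j_p\circ e(E)$) agrees with the symmetric projective tensor norm of the Banach space $G_{E\!,p}^\mu$; the unique isometric extension of continuous symmetric $n$-linear forms across the dense inclusion is the clean way to settle it. Everything else reduces to the universal property of the linearization, Lemma \ref{lemanovo}, and the isometry (\ref{iso}).
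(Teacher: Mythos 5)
Your proposal is correct and follows essentially the same route as the paper: both use Lemma \ref{lemanovo} to see that $Q_{L,s}$ maps $\otimes^{n,s}_{\pi_s}j_p(e(E))$ onto $E^{p/n}$ and the identity (\ref{iso}) to see that it is an isometry, then pass to completions. The only difference is that you spell out the density/extension argument identifying the completion of $\otimes^{n,s}_{\pi_s}j_p(e(E))$ with $\hat\otimes^{n,s}_{\pi_s}G_{E\!,p}^{\mu}$, a step the paper dismisses as obvious.
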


\begin{proof} 
Calling on Lemma \ref{lemanovo} we have
$$Q_{L,s}( \otimes_{n,s}^{\pi_s} j_p(e(E)))= j_{p/n}\circ \delta\left( \otimes_{n,s}^{\pi_s}E\right)={\sc \rm SPAN}( j_{p/n}\circ \delta \circ \delta_n(E)) = E^{p/n},$$where {\sc \rm SPAN} denotes the linear hull.   Endowing $E^{p/n}$ with the norm $\pi_{\frac{p}{n}}$, by (\ref{iso}) it follows that $Q_{L,s}$ is a linear isometry from $\otimes_{n,s}^{\pi_s} j_p(e(E))$ onto $E^{p/n}$, which obviously induces an isometric isomorphism between their completions.
\end{proof}

\begin{proposition} {\rm \cite[Proposition 3.3]{BoPeRuLAMA}}\label{factorization2} A polynomial $P \in {\mathcal P}(^nE;F)$ is $p$-dominated if and only if there is a regular Borel probability measure $\mu$ on
$B_{E'}$ 
 and a continuous linear operator $v \colon \hat\otimes^{n,s}_{\pi_s}G_{E\!,p}^\mu
\longrightarrow F$ such that the following diagram commutes
\begin{center}
$\begin{CD}
E                      @ > P >>            F\\
@V{e}VV                  @ AAvA\\
e(E) @>{\delta^{G_{E\!,p}^\mu}_n\circ j^e_{p}}>>  \hat\otimes^{n,s}_{\pi_s}G_{E\!,p}^\mu\\
\end{CD}$
\end{center}
Moreover, $\|v\|=\|P\|_{d,p}.$
\end{proposition}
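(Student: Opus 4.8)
The plan is to prove both implications, deriving the domination estimate directly from the factorization for the ``if'' part and constructing $v$ through the linearization machinery of Lemma \ref{lemanovo} and Theorem \ref{th} for the ``only if'' part. For the ``if'' direction, suppose the diagram commutes with a continuous linear $v$. For $x\in E$ the element $\delta^{G_{E\!,p}^\mu}_n(j_p^e(e(x)))$ is the elementary symmetric tensor $\otimes^n j_p(e(x))$, whose projective $s$-norm is $\|j_p(e(x))\|_{L_p}^n=\left(\int_{B_{E'}}|\varphi(x)|^p\,d\mu(\varphi)\right)^{n/p}$. Hence
\[
\|P(x)\| = \|v(\otimes^n j_p(e(x)))\| \le \|v\|\,\pi_s\!\left(\otimes^n j_p(e(x))\right) = \|v\|\left(\int_{B_{E'}}|\varphi(x)|^p\,d\mu(\varphi)\right)^{n/p},
\]
so $P$ satisfies (\ref{dominationpolynomial}) and is $p$-dominated with $\|P\|_{d,p}\le\|v\|$.

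For the ``only if'' direction I would first fix the measure. Since $P$ is $p$-dominated, \cite[Proposition 3.4]{BoPeRu} yields $P=Q\circ u$ with $u\colon E\to G$ a $p$-summing operator and $Q$ an $n$-homogeneous polynomial; choosing this factorization so that $\|Q\|\,\pi_p(u)^n=\|P\|_{d,p}$ and letting $\mu$ be an optimal Pietsch measure for $u$, one verifies exactly as in Proposition \ref{prop1} that $\mu$ is a differential Pietsch measure and that its polynomial Pietsch constant for $P$ equals $\|P\|_{d,p}$ (indeed $\|P(x)\|=\|Q(u(x))\|\le\|Q\|\,\pi_p(u)^n\left(\int_{B_{E'}}|\varphi(x)|^p\,d\mu\right)^{n/p}$). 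I would then define $v$ on the dense subspace $E^{p/n}$ of $\hat E^{p/n}$ by $v(j_{p/n}\circ\delta(\theta)):=P_{L,s}(\theta)$ for $\theta\in\otimes^{n,s}E$, and transport it to $\hat\otimes^{n,s}_{\pi_s}G_{E\!,p}^\mu$ via the isometric isomorphism of Theorem \ref{th}; on elementary tensors this reads $v(\otimes^n j_p(e(x)))=P(x)$, which is the required commutativity.

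The heart of the matter is the well-definedness of $v$, which is precisely where injectivity of $j_p^e$ was tacitly used before. By Lemma \ref{lemanovo} we have $j_{p/n}\circ\delta=Q_{L,s}\circ T$, and since $Q_{L,s}$ is injective (it is an isometry by the proof of Theorem \ref{th}), $\ker(j_{p/n}\circ\delta)=\ker T$; thus I must show $\ker T\subseteq\ker P_{L,s}$. As $T$ is the symmetric tensor power of $j_p\circ e$, its kernel is generated by symmetric tensors having at least one factor in $N:=\ker(j_p\circ e)=\{z:\int_{B_{E'}}|\varphi(z)|^p\,d\mu=0\}$, so it suffices to prove $\check P(z,x_2,\ldots,x_n)=0$ whenever $z\in N$. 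This is delivered by the multilinear domination of $\check P$ by $\mu$ (the S\'anchez-P\'erez argument in the Remark after Proposition \ref{prop1}): since $u(z)=0$ for $z\in N$ and $\check P=\check Q\circ(u,\ldots,u)$, the mixed term vanishes (at the level of values this is also Proposition \ref{prop2}). For continuity I would invoke the definition of $\pi_{p/n}$: for any representation $T(\theta)=\sum_j\beta_j\otimes^n j_p(e(w_j))$ the containment $\ker T\subseteq\ker P_{L,s}$ gives $P_{L,s}(\theta)=\sum_j\beta_j P(w_j)$, whence $\|P_{L,s}(\theta)\|\le\sum_j|\beta_j|\,\|P(w_j)\|\le\|P\|_{d,p}\sum_j|\beta_j|\,\|j_{p/n}\circ\delta(\otimes^n w_j)\|_{L_{p/n}}$; taking the infimum over representations yields $\|v(j_{p/n}\circ\delta(\theta))\|\le\|P\|_{d,p}\,\pi_{p/n}(j_{p/n}\circ\delta(\theta))$, so $v$ is continuous with $\|v\|\le\|P\|_{d,p}$.

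Combining $\|P\|_{d,p}\le\|v\|$ from the first paragraph with $\|v\|\le\|P\|_{d,p}$ gives $\|v\|=\|P\|_{d,p}$, and extending $v$ by continuity to the completion finishes the argument. The main obstacle I expect is twofold. First, the kernel containment $\ker T\subseteq\ker P_{L,s}$ is exactly the gap that the differential Pietsch measure machinery (Propositions \ref{prop1} and \ref{prop2} together with the multilinear Remark) is designed to close, and it must be handled with care because $N$ need not be trivial. Second, the exact equality $\|v\|=\|P\|_{d,p}$ forces a single $\mu$ that is simultaneously a differential Pietsch measure and attains the dominated norm; a soft weak*-compactness argument only produces measures with constant arbitrarily close to $\|P\|_{d,p}$, so one genuinely needs a norm-attaining $p$-summing factorization $P=Q\circ u$ of $P$, and establishing this attainment is the delicate point of the ``only if'' direction.
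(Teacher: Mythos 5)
Your proposal is correct and its engine is the same as the paper's: Proposition \ref{prop1} supplies a differential Pietsch measure and Proposition \ref{prop2} (equivalently, the multilinear vanishing in the remark following it) is what makes the closing map well defined despite the possible non-injectivity of $j_p^e$. The packaging differs in two places. First, the paper never passes through $E^{p/n}$, $T$ and $P_{L,s}$: it defines the $n$-homogeneous polynomial $R$ on the subspace $j_p^e\circ e(E)\subset L_p(\mu)$ by $R(j_p^e(e(x))):=P(x)$ --- well defined by Proposition \ref{prop2}, continuous by (\ref{dominationpolynomial}) --- and takes $v:=R^L$, the linearization of $R$ on $\hat\otimes^{n,s}_{\pi_s}G_{E\!,p}^\mu$. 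This sidesteps your algebraic description of $\ker T$ as being generated by symmetrized tensors with one factor in $N=\ker(j_p\circ e)$; that claim is correct (split $E=N\oplus M$ and note that the symmetric tensor power of $j_p\circ e$ is injective on $\otimes^{n,s}M$), but it is extra work, and your reduction of $\ker(j_{p/n}\circ\delta)$ to $\ker T$ leans on the isometry of Theorem \ref{th}, which the paper's route does not need here. Second, for the ``if'' direction the paper composes with the $p$-dominated polynomial $\delta^{G_{E\!,p}^\mu}_n\circ j_p^e$ (citing \cite[Proposition 3.4]{BoPeRu}) rather than computing $\pi_s(\otimes^n j_p(e(x)))=\|j_p(e(x))\|_{L_p}^n$ directly; your computation is more self-contained and yields $\|P\|_{d,p}\le\|v\|$ in the same stroke. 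Finally, the point you flag as delicate --- that the exact equality $\|v\|=\|P\|_{d,p}$ requires the chosen differential Pietsch measure to have Pietsch constant exactly $\|P\|_{d,p}$ for $P$ --- is a genuine subtlety, but note that the paper does not resolve it any more explicitly than you do: it simply asserts $\|R\|\le\|P\|_{d,p}$ (which is the same normalization of $\mu$ in disguise) and then closes the loop with the chain $\|P\|_{d,p}\le\|R^L\|\cdot\|\delta^{G_{E\!,p}^\mu}_n\|\cdot\pi_p(j_p^e)^n\le\|R^L\|=\|R\|\le\|P\|_{d,p}$. So your identification of where the optimality of the factorization $P=Q\circ u$ enters is accurate, and your proof is otherwise complete.
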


\begin{proof}
Assume that $P$ is $p$-dominated. By Proposition \ref{prop1} there is a differential Pietsch measure $\mu$ for $P$. Given $x\in E$, define $R(j_p^e\circ e(x)):=P(x)$. 
 By Proposition \ref{prop2},  $R$ is a well defined $n$-homogeneous polynomial from $j_p^e\circ e (E)\subset L_p(\mu)$ to $F$. The continuity of $R$ follows from (\ref{dominationpolynomial}). Then, the following
diagram commutes
\begin{center}
$\begin{CD}
E                      @ > P >>            F\\
@V{e}VV                  @ AARA\\
e(E) @>{j^e_{p}}>>  j_p^e\circ e (E)\\
@VVV @VVV\\
C(B_{E^*}) @>{j_{p}} >> L_p(\mu)
\end{CD}$
\end{center}
that is, $R\circ j_p^e\circ e=P$. Moreover, $\|R\|\leq \|P\|_{p,d}$. Defining $v:=R^L$ as the
linearization of $R$ on $\hat \otimes_{\pi_s}^{n,s}G_{E\!,p}^\mu$ we obtain the desired commutative diagram.

As to the converse, by \cite[Proposition 3.4]{BoPeRu} we know that $\delta_n^{G_{E\!,p}^\mu}\circ
j_p^e$ is $p$-dominated, so $P=v\circ(\delta_n^{G_{E\!,p}^\mu}\circ j_p^e)\circ e$ is $p$-dominated as
well. Denoting by $\pi_p(j_p^e)$ the $p$-summing norm of $j_p^e$, the following computation completes the
proof:
\begin{align*}
\|P\|_{d,p} & = \|R^L \circ \delta^{G_{E\!,p}^\mu}_n \circ j_p^e \circ e\|_{d,p}
\leq \|R^L\| \cdot \|  \delta^{G_{E\!,p}^\mu}_n \circ j_p^e
\|_{d,p} \cdot \|e\|^n\\
 & \leq  \|R^L\| \cdot \|\delta^{G_{E\!,p}^\mu}_n\| \cdot(\pi_p(j_p^e))^n \leq \|R^L\| = \|v\| =
\|R\| \leq \|P\|_{d,p}.
\end{align*}
\end{proof}

We now have all the ingredients to complete the proof of the factorization theorem for dominated polynomials \cite[Theorem 4.4]{BoPeRu}.

\begin{theorem}{\rm \cite[Theorem 4.4]{BoPeRu}}\label{Theorem 0.3} A polynomial $P \in {\mathcal P}(^nE;F)$ is $p$-dominated if and only if
there is a regular Borel probability measure $\mu$ on $B_{E'}$ with the weak* topology and a continuous
linear operator $u \colon \left(E^{\frac{p}{n}}, \pi_{\frac{p}{n}}\right) \longrightarrow F$ such that the
following diagram commutes
\begin{center}
$\begin{CD}
E                      @ > P >>            F\\
@V{\delta \,\, \circ \,\,\delta_n}VV                  @ AAuA\\
E_n @>{j^{E_n}_{\frac{p}{n}}}>> E^{\frac{p}{n}}\\
@VVV @VVV\\
C(B_{E'}) @>{j_{\frac{p}{n}}} >> L_{\frac{p}{n}(\mu)}
\end{CD}$
\end{center}
\end{theorem}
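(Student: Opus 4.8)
The plan is to prove the two implications separately: the \textquotedblleft if\textquotedblright\ direction by a direct domination estimate, and the substantive \textquotedblleft only if\textquotedblright\ direction by gluing the factorization of Proposition~\ref{factorization2} to the isometric identification of Theorem~\ref{th}. Throughout I read $E_n=\delta(\otimes^{n,s}_{\pi_s}E)\subseteq C(B_{E'})$, with $j^{E_n}_{p/n}$ the restriction of $j_{p/n}$ to $E_n$, so that $j^{E_n}_{p/n}\circ\delta\circ\delta_n(x)=j_{p/n}(\delta(\otimes^n x))=(j^e_{p/n})^n(e(x))\in E^{p/n}$ for every $x\in E$, the last equality holding because $\delta(\otimes^n x)(\varphi)=\varphi(x)^n=(e(x))^n(\varphi)$.

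For the \textquotedblleft if\textquotedblright\ part, suppose the diagram commutes, so that $P(x)=u\left(j_{p/n}(\delta(\otimes^n x))\right)$. Since $\delta(\otimes^n x)(\varphi)=\varphi(x)^n$, the trivial representation $T(\otimes^n x)=\otimes^n j_p(e(x))$ in the definition of $\pi_{p/n}$ gives
$$\pi_{p/n}\left(j_{p/n}(\delta(\otimes^n x))\right)\le\left\|j_{p/n}(\delta(\otimes^n x))\right\|_{L_{p/n}}=\left(\int_{B_{E'}}|\varphi(x)|^p\,d\mu(\varphi)\right)^{n/p}.$$
Continuity of $u$ then yields $\|P(x)\|\le\|u\|\cdot\pi_{p/n}(j_{p/n}(\delta(\otimes^n x)))$, which is exactly the domination inequality (\ref{dominationpolynomial}) with constant $\|u\|$; hence $P$ is $p$-dominated with $\|P\|_{d,p}\le\|u\|$.

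For the \textquotedblleft only if\textquotedblright\ part, assume $P$ is $p$-dominated. Proposition~\ref{factorization2} furnishes a measure $\mu$ and a continuous linear operator $v\colon\hat\otimes^{n,s}_{\pi_s}G_{E\!,p}^\mu\longrightarrow F$ with $P=v\circ\delta_n^{G_{E\!,p}^\mu}\circ j^e_p\circ e$ and $\|v\|=\|P\|_{d,p}$, while Theorem~\ref{th} extends $Q_{L,s}$ to an isometric isomorphism $\Phi\colon\hat\otimes^{n,s}_{\pi_s}G_{E\!,p}^\mu\longrightarrow\hat E^{p/n}$. I would then set $u:=v\circ\Phi^{-1}$ and restrict it to $E^{p/n}$, obtaining a continuous linear map on $(E^{p/n},\pi_{p/n})$ with $\|u\|=\|v\|=\|P\|_{d,p}$. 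The one point to verify is that $\Phi$ carries the left-hand map of Proposition~\ref{factorization2} onto the left-hand map of the present theorem: since $\delta_n^{G_{E\!,p}^\mu}(j^e_p(e(x)))=\otimes^n j_p(e(x))=T(\otimes^n x)$ and $\Phi$ extends $Q_{L,s}$, Lemma~\ref{lemanovo} (that is, $Q_{L,s}\circ T=j_{p/n}\circ\delta$) gives $\Phi(\delta_n^{G_{E\!,p}^\mu}(j^e_p(e(x))))=j_{p/n}(\delta(\otimes^n x))=j^{E_n}_{p/n}(\delta(\delta_n(x)))$. Composing, $u\circ j^{E_n}_{p/n}\circ\delta\circ\delta_n=v\circ\Phi^{-1}\circ\Phi\circ\delta_n^{G_{E\!,p}^\mu}\circ j^e_p\circ e=P$, as desired.

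All the genuine content is imported: Proposition~\ref{factorization2} rests on the differential Pietsch measure produced by Proposition~\ref{prop1} and on the well-definedness supplied by Proposition~\ref{prop2}, and Theorem~\ref{th} supplies the isometry. The hard part is therefore not any new estimate but the bookkeeping that makes the two factorization diagrams line up: confirming the intertwining identity $\Phi\circ\delta_n^{G_{E\!,p}^\mu}\circ j^e_p=j^{E_n}_{p/n}\circ\delta\circ\delta_n$ on the dense subspace, and tracking the passages to completions (from $j_p(e(E))$ to its closure $G_{E\!,p}^\mu$, and from $\otimes^{n,s}_{\pi_s}$ to $\hat\otimes^{n,s}_{\pi_s}$) so that $v\circ\Phi^{-1}$ genuinely defines a bounded operator on $(E^{p/n},\pi_{p/n})$. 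Since none of this uses injectivity of $j_p^e$, it fills precisely the gap flagged in the introduction.
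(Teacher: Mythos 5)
Your proposal is correct and follows essentially the same route as the paper: obtain $\mu$ and $v$ from Proposition~\ref{factorization2}, use the isometric isomorphism $Q_{L,s}$ from (the proof of) Theorem~\ref{th} to define $u:=v\circ Q_{L,s}^{-1}$, and verify the intertwining identity $Q_{L,s}\bigl(\delta_n^{G^\mu_{E,p}}(j_p^e(e(x)))\bigr)=j_{p/n}(\delta(\otimes^n x))$ via Lemma~\ref{lemanovo}. The only differences are cosmetic: you pass through the completion $\hat E^{p/n}$ before restricting back, and you write out the \textquotedblleft if\textquotedblright\ direction explicitly (correctly, via the trivial representation in the definition of $\pi_{p/n}$), whereas the paper simply refers to the original proof in \cite{BoPeRu} for that implication.
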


\begin{proof} Assume that $P$ is $p$-dominated. Let $\mu$ be the Borel probability and $v$ be the linear operator provided by Proposition \ref{factorization2}. In the proof of Theorem \ref{th} we saw that the operator
$$ Q_{L,s}\colon \otimes_{n,s}^{\pi_s} j_p(e(E))\longrightarrow  E^{\frac{p}{n}}$$
is an isometric isomorphism. Therefore the operator
$$u \colon  E^{\frac{p}{n}} \longrightarrow F~,~u:= v \circ Q_{L,s}^{-1},$$
is well-defined, linear and continuous. We just have to prove that the diagram commutes. Given $x \in E$,
\begin{align*} Q_{L,s}\left(\delta_n^{G^\mu_{E,p}}(j_p^e(e(x)))\right)(\varphi)&=
Q_{L,s}\left(j_p^e(e(x)) \otimes \cdots \otimes j_p^e(e(x) )\right)(\varphi)\\
&= Q(j_p^e(e(x)))(\varphi) = \left[j_p^e(e(x))(\varphi)\right]^n = \varphi(x)^n\\
&=\delta(x \otimes \cdots \otimes x)(\varphi) = \delta\left(\delta_n^E(x) \right)(\varphi)\\& = j_{\frac{p}{n}}\left(\delta\left(\delta_n^E(x) \right) \right)(\varphi)
\end{align*}
for every $\varphi \in B_{E'}$, proving that
$$Q_{L,s}\left(\delta_n^{G^\mu_{E,p}}(j_p^e(e(x)))\right) =  j_{\frac{p}{n}}\left(\delta\left(\delta_n^E(x) \right) \right), $$
hence
$$Q_{L,s}^{-1}\left( j_{\frac{p}{n}}\left(\delta\left(\delta_n^E(x) \right) \right)\right)=  \delta_n^{G^\mu_{E,p}}(j_p^e(e(x))). $$
Since the diagram of Proposition \ref{factorization2} commutes, we have
\begin{align*} P(x) & = v \left(\delta_n^{G^\mu_{E,p}}(j_p^e(e(x))) \right)= v \left(Q_{L,s}^{-1}\left( j_{\frac{p}{n}}\left(\delta\left(\delta_n^E(x) \right) \right)\right) \right) = u\left(j_{\frac{p}{n}}\left(\delta\left(\delta_n^E(x) \right) \right) \right).
\end{align*}
The proof of the converse is correct in \cite[Theorem 4.4]{BoPeRu}.
\end{proof}

As in \cite{BoPeRu} we can conclude then with the desired factorization theorem for dominated polynomials through the  dominated polynomial $\left(j^e_{\frac{p}{n}}\right)^n$.

\begin{theorem}{\rm \cite[Theorem 4.6]{BoPeRu}} \label{Theorem 0.6} A polynomial $P \in {\mathcal P}(^nE;F)$ is $p$-dominated if and only if
there is a regular Borel probability measure $\mu$ on $B_{E'}$ with the weak* topology and a continuous
linear operator $u \colon \left(E^{\frac{p}{n}}, \pi_{\frac{p}{n}}\right) \longrightarrow F$ such that the
following diagram commutes
\begin{center}
$\begin{CD}
E                      @ > P >>            F\\
@V{e}VV                  @ AAuA\\
e(E) @>{\left(j^e_{\frac{p}{n}}\right)^n}>> E^{\frac{p}{n}}\\
@VVV @VVV\\
C(B_{E'}) @>{\left(j_{\frac{p}{n}}\right)^n} >> L_{\frac{p}{n}(\mu)}
\end{CD}$
\end{center}
\end{theorem}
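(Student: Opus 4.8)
The plan is to derive Theorem~\ref{Theorem 0.6} directly from Theorem~\ref{Theorem 0.3}. Both statements factor $P$ through the very same normed space $(E^{p/n},\pi_{p/n})$ by means of the very same closing linear operator $u$ and the same measure $\mu$; the only difference is the route taken from $E$ into $E^{p/n}$. In Theorem~\ref{Theorem 0.3} this route is $j^{E_n}_{p/n}\circ\delta\circ\delta_n$, whereas in Theorem~\ref{Theorem 0.6} it is $(j^e_{p/n})^n\circ e$. Thus it suffices to check that these two $n$-homogeneous maps from $E$ to $E^{p/n}$ coincide.

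The first step is the pointwise identity in $C(B_{E'})$ that for each $x\in E$ and each $\varphi\in B_{E'}$,
$$\delta(\delta_n^E(x))(\varphi)=\delta(\otimes^n x)(\varphi)=\varphi(x)^n=(e(x)(\varphi))^n=((e(x))^n)(\varphi),$$
where I use the defining property $\delta(\otimes^n x)(\varphi)=\langle\varphi,x\rangle^n$ and the definition $e(x)(\varphi)=\varphi(x)$. Hence $\delta(\otimes^n x)=(e(x))^n$ as elements of $C(B_{E'})$. Applying the linear canonical map $j_{p/n}$ and recalling that $(j^e_{p/n})^n(e(x))=j_{p/n}((e(x))^n)$ then gives
$$(j^e_{p/n})^n(e(x))=j_{p/n}((e(x))^n)=j_{p/n}(\delta(\otimes^n x))=j^{E_n}_{p/n}(\delta(\delta_n^E(x)))$$
for every $x\in E$, that is, $(j^e_{p/n})^n\circ e=j^{E_n}_{p/n}\circ\delta\circ\delta_n$.

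With this identity in hand both implications follow at once. If $P$ is $p$-dominated, Theorem~\ref{Theorem 0.3} provides $\mu$ and a continuous linear operator $u\colon(E^{p/n},\pi_{p/n})\to F$ with $P=u\circ j^{E_n}_{p/n}\circ\delta\circ\delta_n$; substituting the identity rewrites this as $P=u\circ(j^e_{p/n})^n\circ e$, which is precisely the commutativity demanded by the diagram of Theorem~\ref{Theorem 0.6}. Conversely, if such a commuting diagram is given, i.e.\ $P=u\circ(j^e_{p/n})^n\circ e$ for some continuous linear $u$ and some $\mu$, the same substitution turns this into the factorization appearing in Theorem~\ref{Theorem 0.3}, and its converse (which is correct in \cite{BoPeRu}) yields that $P$ is $p$-dominated.

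I do not anticipate a genuine obstacle: the result is essentially a repackaging of Theorem~\ref{Theorem 0.3}, and its whole content is the elementary identity $\delta(\otimes^n x)=(e(x))^n$. The one point deserving attention is to confirm that the space $E^{p/n}$, the norm $\pi_{p/n}$, and the operator $u$ are literally the same objects in both theorems, so that no new renorming or continuous extension is required; this holds by the definitions fixed before Theorem~\ref{Theorem 0.3}. Compatibility of the two displayed diagrams is then automatic, since their bottom rows differ only in replacing the linear map $f\mapsto j_{p/n}(f)$ by the polynomial $f\mapsto j_{p/n}(f^n)$, which matches exactly the replacement of $\delta\circ\delta_n$ by $e$ along the top.
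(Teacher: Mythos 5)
Your proposal is correct and matches the paper's treatment: the paper gives no separate proof of this theorem, asserting only that it follows from Theorem~\ref{Theorem 0.3} ``as in \cite{BoPeRu}'', and the content of that deduction is exactly your identity $\delta(\otimes^n x)=(e(x))^n$ in $C(B_{E'})$, which shows $(j^e_{p/n})^n\circ e=j_{p/n}\circ\delta\circ\delta_n$ and lets the same $\mu$ and $u$ be reused in both directions.
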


\bigskip

\noindent{\bf Acknowledgement.} The authors thank E. A. S\'anchez-P\'erez for his helpful suggestions.

\vspace{2mm}

%

\noindent[Geraldo Botelho] Faculdade de Matem\'atica, Universidade Federal de
Uberl\^andia, 38.400-902 -- Uberl\^andia, Brazil, e-mail: botelho@ufu.br.

\medskip

\noindent[Daniel Pellegrino] Departamento de Matem\'atica, Universidade Federal da Para\'iba, 58.051-900 -- Jo\~ao Pessoa, Brazil, e-mail: dmpellegrino@gmail.com.

\medskip

\noindent [Pilar Rueda] Departamento de An\'alisis Matem\'atico, Universidad de Valencia, 46.100 Burjasot -- Valencia, Spain, e-mail: pilar.rueda@uv.es.

\end{document}